\definecolor{webgreen}{rgb}{0,.5,0}
\definecolor{webbrown}{rgb}{.6,0,0}
\newcommand{\seqnum}[1]{\href{http://www.research.att.com/cgi-bin/access.cgi/as/~njas/sequences/eisA.cgi?Anum=#1}{\underline{#1}}}
\begin{document}

%\begin{center}                         % This produces the JIS logo. 
%\epsfxsize=4in
%\leavevmode\epsffile{logo129.eps}
%\end{center}

\newcommand{\D}{\displaystyle}
\newcommand{\V}{\vskip.20in}

\begin{center}
\vskip1cm
{\LARGE\bf A Simplified Binet Formula for $k$-Generalized Fibonacci Numbers}
\vskip 1cm
\large
Gregory P.~B.~Dresden\\
Department of Mathematics\\
Washington and Lee University \\
Lexington, VA 24450 \\
\href{mailto:dresdeng@wlu.edu}{\tt dresdeng@wlu.edu} \\
\end{center}

\vskip .2 in

\begin{abstract}
In this paper, we present a particularly nice Binet-style formula that
can be used to produce the $k$-generalized Fibonacci numbers (that is,
the Tribonaccis, Tetranaccis, etc.). Furthermore, we show that in fact
one needs only take the integer closest to the first term of this
Binet-style formula in order to generate the desired sequence. 	
\end{abstract}

\newtheorem{theorem}{Theorem}
\newtheorem{corollary}[theorem]{Corollary}
\newtheorem{lemma}[theorem]{Lemma}
\newtheorem{proposition}[theorem]{Proposition}
\newtheorem{conjecture}[theorem]{Conjecture}

\section{Introduction}

Let $k \geq 2$ and define $F_n^{(k)}$, the 
$n^{\mbox{th}}$ $k$-generalized Fibonacci number, to satisfy the
recurrence relation
\[
F_n^{(k)} = F_{n-1}^{(k)} + F_{n-2}^{(k)} + \dots + F_{n-k}^{(k)}\qquad \qquad (k \mbox{ terms})
\]
... and with initial conditions $0,0,\dots,0,1$ ($k$ terms) such that the 
first non-zero term is $F_1^{(k)} = 1$. 

These numbers are also called the Fibonacci $k$-step numbers, 
Fibonacci $k$-sequences,
or $k$-bonacci numbers. 
Note that for $k=2$, we have $F_n^{(2)} = F_n$, 
our familiar Fibonacci numbers. For $k=3$ we have the so-called 
Tribonaccis 
(sequence number \seqnum{A000073} in 
Sloane's {\it Encyclopedia of
Integer Sequences}),
followed by the Tetranaccis (\seqnum{A000078}) for $k=4$, and so on.
According to Kessler and Schiff \cite{KesslerSchiff}, these 
numbers also appear in probability theory and in certain sorting algorithms. 
We present here a chart of these numbers for the first few values of $k$:
\begin{center}\begin{math}
\begin{array}{l|l|r|l}
k\ \ &\mbox{name} & \mbox{i.c.} & \mbox{\ \ first few non-zero terms} \\
\hline 
2 & \mbox{Fibonacci} & 0,1 & \ \ 1,1,2,3,5,8,13,21,34,\dots \\[1.3ex]
3 & \mbox{Tribonacci} & 0,0,1 &\ \  1,1,2,4,7,13,24,44,81,\dots \\[1.3ex]
4 & \mbox{Tetranacci} & 0,0,0,1 &\ \  1,1,2,4,8,15,29,56,108,\dots \\[1.3ex]
5 & \mbox{Pentanacci} & 0,0,0,0,1 &\ \  1,1,2,4,8,16,31,61,120,\dots
\end{array}
\end{math}\end{center}

We remind the reader of the famous Binet formula (also known
as the de~Moivre formula) that
can be used to calculate $F_n$, the Fibonacci numbers:
\begin{eqnarray*}
F_n &=& \frac{1}{\sqrt{5}} \left[ \left( \frac{1+\sqrt{5}}{2}\right)^n 
     - \left( \frac{1-\sqrt{5}}{2}\right)^n \right] \\[1.5ex]
     &=& \frac{\alpha^n - \beta^n}{\alpha - \beta}
\end{eqnarray*}
...for $\alpha > \beta$ the two roots of $x^2 - x - 1=0$. For our
purposes, it is convenient (and not particularly difficult) to rewrite this
formula as follows:
\begin{equation}
 F_n = \frac{\alpha-1}{2 + 3(\alpha -2)} \alpha^{n-1} + 
     \frac{\beta-1}{2 + 3(\beta -2)} \beta^{n-1}
\label{Binet2}
\end{equation}
We leave the details to the reader.

Our first (and very minor) 
result is the following representation of $F_n^{(k)}$:

\begin{theorem}
\label{T1}
 For $F_n^{(k)}$ the 
$n^{\mbox{th}}$ $k$-generalized Fibonacci number, then
\begin{equation}\label{fT1}
F_n^{(k)} = \sum_{i=1}^k \frac{\alpha_i - 1}{2 + (k+1)(\alpha_i - 2)} \alpha_i^{n-1}
\end{equation}
for $\alpha_1, \dots, \alpha_k$ the roots of $x^k - x^{k-1} - \cdots - 1=0$.
\end{theorem}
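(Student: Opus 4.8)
The plan is to route through the ordinary generating function of $\{F_n^{(k)}\}$ and then read off the closed form via a partial fraction decomposition. The one external ingredient I need is that the characteristic polynomial $p(x) = x^k - x^{k-1} - \cdots - x - 1$ has $k$ \emph{distinct} roots $\alpha_1,\dots,\alpha_k$ (with one real dominant root in $(1,2)$ and the rest inside the unit disk). This is classical, but it can also be obtained from the factorization $(x-1)\,p(x) = x^{k+1} - 2x^k + 1 =: q(x)$ together with $p(1) = 1 - k \neq 0$: a repeated root of $p$ would be a repeated root of $q$ other than $1$, hence a common root of $q$ and $q'(x) = x^{k-1}\bigl((k+1)x - 2k\bigr)$, and one checks that neither $x=0$ nor $x = 2k/(k+1)$ is a root of $q$.

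Next I would establish the generating function. From the recurrence, the convention $F_m^{(k)} = 0$ for $m \le 0$, and $F_1^{(k)} = 1$, a short manipulation gives
\[
f(x) \,:=\, \sum_{n \ge 1} F_n^{(k)}\, x^n \;=\; \frac{x}{\,1 - x - x^2 - \cdots - x^k\,}.
\]
The point is that the denominator is precisely $x^k\, p(1/x)$, so its roots are the reciprocals $1/\alpha_i$, which are distinct and nonzero. Therefore $f$ has a partial fraction expansion $f(x) = \sum_{i=1}^{k} \dfrac{a_i}{\,1 - \alpha_i x\,}$, and comparing coefficients of $x^n$ gives $F_n^{(k)} = \sum_{i=1}^{k} a_i \alpha_i^{n}$. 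Writing $D(x) = 1 - x - \cdots - x^k$, a residue computation yields $a_i = -1/D'(1/\alpha_i)$; differentiating $D(x) = x^k p(1/x)$ and using $p(\alpha_i) = 0$ collapses this to $a_i = \alpha_i^{k-2}/p'(\alpha_i)$, so that $F_n^{(k)} = \sum_{i=1}^{k}\bigl(\alpha_i^{k-1}/p'(\alpha_i)\bigr)\,\alpha_i^{n-1}$.

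It then remains only to match the coefficient $\alpha_i^{k-1}/p'(\alpha_i)$ with the expression in \eqref{fT1}. Differentiating $q(x) = (x-1)p(x)$ gives $q'(x) = p(x) + (x-1)p'(x)$, so at a root $\alpha_i$ of $p$ we have $(\alpha_i - 1)\,p'(\alpha_i) = q'(\alpha_i) = \alpha_i^{k-1}\bigl((k+1)\alpha_i - 2k\bigr)$; since $(k+1)\alpha_i - 2k = 2 + (k+1)(\alpha_i - 2)$ and $\alpha_i \neq 1$, rearranging yields
\[
\frac{\alpha_i^{k-1}}{p'(\alpha_i)} \;=\; \frac{\alpha_i - 1}{\,2 + (k+1)(\alpha_i - 2)\,},
\]
which is exactly \eqref{fT1}. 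I expect the only genuine subtlety to be the distinctness of the roots, since that is what guarantees the simple-pole partial fraction form (and hence a purely exponential solution, with no polynomial-times-exponential terms); everything after that is bookkeeping organized around the identity $(x-1)\,p(x) = x^{k+1} - 2x^k + 1$, which is what makes both the generating-function denominator and the final simplification come out so cleanly. As a sanity check, specializing to $k = 2$ reproduces the rewritten Binet formula \eqref{Binet2}.
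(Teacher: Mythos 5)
Your proof is correct, but it takes a genuinely different route from the paper's. The paper does not prove Theorem \ref{T1} from first principles: it quotes the Spickerman--Joyner formula $F_n^{(k)}=\sum_{i}\frac{\alpha_i^{k+1}-\alpha_i^{k}}{2\alpha_i^{k}-(k+1)}\alpha_i^{n-1}$ from the literature and then checks, using $\alpha_i^{k+1}-2\alpha_i^{k}=-1$ (i.e., $\alpha_i-2=-\alpha_i^{-k}$), that its coefficients equal $\frac{\alpha_i-1}{2+(k+1)(\alpha_i-2)}$. You instead give a self-contained derivation: the generating function $x/(1-x-\cdots-x^k)$, partial fractions over the distinct roots, the coefficient $\alpha_i^{k-1}/p'(\alpha_i)$, and then essentially the same closing algebra the paper uses, organized around $(x-1)p(x)=x^{k+1}-2x^k+1$. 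What your approach buys is independence from the cited source, plus an explicit account of where simplicity of the roots enters ($p'(\alpha_i)\neq 0$); for the step that $2k/(k+1)$ is not a root of $q$, the cleanest justification is the rational root theorem ($q$ is monic with constant term $1$, so any rational root is $\pm 1$, and $2k/(k+1)$ is not an integer for $k\geq 2$). Two harmless loose ends: the convention $F_m^{(k)}=0$ should be invoked only for $2-k\leq m\leq 0$ (running the recurrence backwards would actually force $F_{1-k}^{(k)}=1$), but indices below $2-k$ never occur in your coefficient extraction, so nothing breaks; and the nonvanishing of the denominator $2+(k+1)(\alpha_i-2)$ is automatic from $(\alpha_i-1)p'(\alpha_i)=\alpha_i^{k-1}\bigl((k+1)\alpha_i-2k\bigr)\neq 0$, so it needs no separate check.
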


This is a new presentation, but hardly a new result. There are many other
ways of representing these $k$-generalized Fibonacci numbers, as seen in 
the articles 
\cite{Ferguson}, \cite{Flores}, \cite{Gabai}, \cite{Kalman}, \cite{LeeKimShin},
\cite{Levesque}, \cite{Miles}.
Our equation 
(\ref{fT1}) of Theorem \ref{T1} is perhaps slightly easier to understand,
and it also allows us to do some analysis (as seen below). 
We point out that for $k=2$, equation (\ref{fT1}) reduces to 
the variant of the Binet formula (for the standard Fibonacci numbers) from
equation (\ref{Binet2}). 

As shown in three distinct proofs 
(\cite{Miles}, \cite{Miller}, and \cite{Wolfram}), the equation 
$x^k - x^{k-1} - \cdots - 1=0$ from Theorem \ref{T1} has just one 
root $\alpha$ such that
$|\alpha|>1$, and
the other roots are strictly inside the unit circle. We can
conclude that the contribution of the other 
roots in formula \ref{fT1} will quickly become trivial, and thus:
\begin{equation}\label{obs}
F_n^{(k)} \approx \frac{\alpha - 1}{2 + (k+1)(\alpha - 2)} \alpha^{n-1} 
\qquad\mbox{... for $n$ sufficiently large.}
\end{equation}
It's well known that 
for the Fibonacci sequence $F_n^{(2)} = F_n$, the
``sufficiently large'' $n$ in equation (\ref{obs}) is $n=0$, as shown
here:

\begin{center}\begin{math}
\begin{array}{c|ccccccc}
n & 0 & 1 & 2 & 3 & 4 & 5 & 6  \\
\hline
F_n & 0 & 1 & 1 & 2 & 3 & 5 & 8 \\[1.3ex]
\frac{1}{\sqrt5}\left(\frac{1 + \sqrt5}{2}\right)^n 
   & 0.447 & 0.724 & 1.171 & 1.894 & 3.065 &  4.960 & 8.025 \\[1.3ex]
|\mbox{error}| & .447 & .277 & .171 & .106 & .065 & .040 & .025
\end{array}
\end{math}\end{center}
It is perhaps surprising to discover that a similar statement holds 
for all the $k$-generalized Fibonacci numbers. Our main result is the 
following:

\begin{theorem}\label{T2}
For $F_n^{(k)}$ the 
$n^{\mbox{th}}$ $k$-generalized Fibonacci number, then
\[
F_n^{(k)} = \mbox{\rm Round}\left[ \frac{\alpha - 1}{2 + (k+1)(\alpha - 2)} \alpha^{n-1} \right]
\]
for all $n \geq 2-k$ and for $\alpha$ the unique 
positive root of $x^k - x^{k-1} - \cdots - 1=0$.
\end{theorem}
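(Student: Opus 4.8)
The plan is to subtract the $i=1$ term from the closed form \eqref{fT1} and show the remaining sum is never as large as $\tfrac12$ in absolute value.

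First observe that \eqref{fT1} holds for \emph{every} integer $n$, not just $n\ge1$. The polynomial $x^k-x^{k-1}-\cdots-1$ has $k$ distinct nonzero roots — multiplying by $x-1$ gives $x^{k+1}-2x^k+1$, whose derivative $x^{k-1}\bigl((k+1)x-2k\bigr)$ has no zero in common with it — so the right side of \eqref{fT1} is \emph{the} solution of the order-$k$ recurrence and, agreeing with $F_n^{(k)}$ on the $k$ initial values, agrees everywhere. Write $\alpha=\alpha_1$ for the unique root with $|\alpha|>1$ and put
\[
D_n=\frac{\alpha-1}{2+(k+1)(\alpha-2)}\,\alpha^{\,n-1},\qquad
E_n=F_n^{(k)}-D_n=\sum_{i=2}^{k}\frac{\alpha_i-1}{2+(k+1)(\alpha_i-2)}\,\alpha_i^{\,n-1}.
\]
Since $F_n^{(k)}\in\mathbb Z$, the equation $\mathrm{Round}(D_n)=F_n^{(k)}$ is exactly $|E_n|<\tfrac12$, so Theorem~\ref{T2} reduces to the single assertion that $|E_n|<\tfrac12$ for all integers $n\ge 2-k$.

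For $2-k\le n\le1$ this is elementary: there $F_n^{(k)}\in\{0,1\}$ ($0$ for $2-k\le n\le0$ and $1$ for $n=1$) while $D_n=c\,\alpha^{\,n-1}$ with $c=(\alpha-1)/\bigl(2+(k+1)(\alpha-2)\bigr)$ and exponent $n-1\le0$. The polynomial $g(x)=x^{k+1}-2x^k+1$ has $g(1)=0$, $g(2)=1$, and is decreasing then increasing on $[1,2]$ with minimum at $\tfrac{2k}{k+1}$, so $\tfrac{2k}{k+1}<\alpha<2$; since $g\bigl(2-\tfrac1k\bigr)=1-\tfrac1k\bigl(2-\tfrac1k\bigr)^k<0$ for $k\ge2$, in fact $2-\tfrac1k<\alpha<2$. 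Thus the denominator $2+(k+1)(\alpha-2)=(k+1)\alpha-2k$ is positive and $\tfrac12<c<1$ (the bound $c>\tfrac12$ is precisely $\alpha<2$, and $c<1$ is precisely $\alpha>2-\tfrac1k$). Hence $|E_1|=1-c<\tfrac12$, and for $n\le0$, $|E_n|=D_n\le D_0=c/\alpha<\tfrac12$, where the last inequality unwinds to $(k+1)\alpha^{1-k}<2$ and follows from $\alpha>2-\tfrac1k$. So this part is bookkeeping.

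The range $n\ge2$ is the real content, and here I would use the contour representation
\[
E_n=\frac{1}{2\pi i}\oint_{|x|=1}\frac{x^{\,n+k-2}}{x^{k}-x^{k-1}-\cdots-1}\,dx ,
\]
got by summing residues at the roots inside $|x|=1$, namely $\alpha_2,\dots,\alpha_k$; the residue $\alpha_i^{\,n+k-2}/\psi'(\alpha_i)$ equals the $i$-th summand of \eqref{fT1} since $(x-1)(x^k-\cdots-1)=x^{k+1}-2x^k+1$. This is valid exactly when $n\ge2-k$, so that $x^{n+k-2}$ contributes no pole at $0$, and the circle $|x|=1$ is usable because $x^k-\cdots-1$ has $\alpha$ as its only root outside the unit circle and none on it (on $|x|=1$, $|2x^k-x^{k+1}|=|2-x|\ge1$, with equality only at $x=1$, which is not a root of $x^k-\cdots-1$ since its value there is $1-k$). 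The remaining job — bounding this integral below $\tfrac12$ uniformly in $n$ — is the heart of Theorem~\ref{T2} and cannot be done crudely: the estimate $|E_n|\le\frac1{2\pi}\int_0^{2\pi}|e^{ik\theta}-\cdots-1|^{-1}\,d\theta$ is independent of $n$ and already about $0.64$ when $k=2$ (the integrand is $1/\sqrt{3-2\cos2\theta}$ there), and the triangle inequality $|E_n|\le\sum_{i\ge2}|c_i|\,|\alpha_i|^{\,n-1}$ fails for large $k$ because $\sum_{i\ge2}|c_i|$ does not stay below $\tfrac12$: the subdominant roots crowd toward the unit circle at arguments $\approx2\pi j/k$, where $|c_i|\approx\dfrac{2\sin(\pi j/k)}{k\sqrt{5-4\cos(2\pi j/k)}}$, and the resulting Riemann sum tends to a constant near $0.55$. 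So one must exploit the cancellation among the (mostly complex) subdominant terms. Having located $\alpha_2,\dots,\alpha_k$ as the roots of $x^{k+1}-2x^k+1$ other than $1$ and $\alpha$ — all lying in $\tfrac12<|x|<1$, since $|2x^k-x^{k+1}|\le|x|^k(2+|x|)<1$ for $|x|\le\tfrac12$ — I would try either to bound the contour integral by balancing the smoothness of $1/(x^k-\cdots-1)$ on the circle against the oscillation of $x^{n+k-2}$, or to prove directly that $\max_{n\ge1}|E_n|$ is attained at $n=1$, where $|E_1|=1-c<\tfrac12$ was already established. I expect this uniform control of the oscillating remainder to be the single genuine obstacle; everything else is organization.
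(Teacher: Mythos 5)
Your reduction to $|E_n| < \tfrac12$, your bounds $2 - \tfrac1k < \alpha < 2$ and $\tfrac12 < c < 1$, and your treatment of the range $2-k \le n \le 1$ are all correct and essentially match what the paper does (its Lemmas~1 and~2 establish the same facts, using a slightly sharper bound $2 - \tfrac{1}{3k} < \alpha$ for $k \ge 4$ to settle the $n \le 0$ cases). But the proposal stops short of proving the theorem: for $n \ge 2$ you set up a contour-integral representation of $E_n$, explain convincingly why the two obvious crude bounds fail, and then say you ``would try'' one of two strategies. That is an honest assessment of the difficulty, not a proof; the range $n \ge 2$ is the entire content of the theorem, and it is left open.

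The paper closes this gap with a short, elementary argument that avoids all analysis on the unit circle. Since $\alpha^k = \alpha^{k-1} + \cdots + 1$, the error $E_n$ satisfies the same order-$k$ recurrence as $F_n^{(k)}$ for $n \ge 2$; subtracting consecutive instances of that recurrence gives $E_{n+1} = 2E_n - E_{n-k}$. If $n_0 \ge 2$ were the smallest index with $|E_{n_0}| \ge \tfrac12$, then $|E_{n_0-k}| < \tfrac12 \le |E_{n_0}|$, so $|E_{n_0+1}| \ge 2|E_{n_0}| - |E_{n_0-k}| > |E_{n_0}|$; iterating (the lagging index $n-k$ always points either to a pre-$n_0$ term, which is below $\tfrac12$, or to an earlier term of the now strictly increasing tail) shows $|E_n|$ grows without ever returning below $\tfrac12$. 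This contradicts $E_n \to 0$, which you already know because $\alpha_2, \dots, \alpha_k$ lie strictly inside the unit circle. You have every ingredient for this in hand --- the recurrence for $E_n$ is immediate from your setup and the base cases are exactly the ones you verified --- so what is missing is this three-line telescoping trick, not the uniform control of an oscillating contour integral that you were bracing for.
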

We point out that this theorem is not as trivial as one might think. 
Note the error 
for $k=6$, as seen in the following chart; it is not monotone decreasing.

\begin{center}\begin{math}
\begin{array}{c|ccccccccccc}
n & 0 & 1 & 2 & 3 & 4 & 5 & 6 & 7  \\
\hline
F_n^{(6)} & 0 & 1 & 1 & 2 & 4 & 8 & 16 & 32  \\[1.3ex]
\frac{\alpha -1}{2 + 7(\alpha-2)}\alpha^5
  &   0.263 & 0.522 & 1.035 & 2.053 & 4.072 & 8.078 & 16.023 & 31.782  \\[1.3ex]
|\mbox{error}| & 
.263 & .478 & .035 & .053 & .072 & .078 & .023 & .218 
\end{array}
\end{math}\end{center}
We also point out that not every recurrence sequence admits such a nice
formula as seen in Theorem \ref{T2}. Consider, for example, the
scaled Fibonacci sequence $10, 10, 20, 30, 50, 80, \dots$, 
which has Binet formula:
\[
\frac{10}{\sqrt{5}}\left( \frac{1+\sqrt{5}}{2}\right)^n 
     - 
\frac{10}{\sqrt{5}}\left( \frac{1-\sqrt{5}}{2}\right)^n.
\]
This can be written as
$
\mbox{\rm Round}\left[\frac{10}{\sqrt{5}}\left( \frac{1+\sqrt{5}}{2}\right)^n \right]
$,
but only for $n \geq 5$. 
As another example, the sequence $1,2,8,24,80, \dots$ (defined by 
$G_n = 2G_{n-1} + 4G_{n-2}$)
can be written as
\[
G_n =  \frac{(1+\sqrt{5})^n}{2 \sqrt{5}}
     - 
\frac{(1-\sqrt{5})^n}{2 \sqrt{5}},
\]
but because both $1+\sqrt{5}$ and $1-\sqrt{5}$ have absolute value 
greater than $1$, then it would be impossible to express $G_n$ in terms
of just one of these two numbers. 

%%%%%%%%%%%%%%%%%%%%%%%%%%%%%%%%%%%%%%%%%%%%%%%%%%%%%%%%%%%%%%%%%%

\section{Previous Results}

We point out that for $k=3$ (the Tribonacci numbers), our Theorem
\ref{T2} was found earlier by Spickerman \cite{Spickerman}. 
His formula 
(modified slightly to match our
notation) reads as follows, where $\alpha$ is the real root, and
$\sigma$ and $\overline{\sigma}$ are the two complex roots, of 
$x^3-x^2-x-1 = 0$:
\begin{equation}\label{sr}
F_n^{(3)} = \mbox{\rm Round}\left[ \frac{\alpha^2}{(\alpha - \sigma)(\alpha - \overline{\sigma})} \alpha^{n-1} \right]
\end{equation}
It is not hard to show that 
for $k=3$, 
our coefficient
$\frac{\alpha - 1}{2 + (k+1)(\alpha - 2)}$
 from Theorem \ref{T2} is equal to  
Spickerman's  coefficient 
$ \frac{\alpha^2}{(\alpha - \sigma)(\alpha - \overline{\sigma})}$.
We leave the details to the reader. 

In a subsequent article \cite{SJ}, Spickerman and Joyner developed
a more complex version of our Theorem \ref{T1} to represent the 
generalized Fibonacci numbers. 
Using our notation, and with $\{\alpha_i\}$ the set of roots of
$x^k - x^{k-1} - \cdots - 1 = 0$, their fomula reads
\begin{equation}\label{sjf}
F_n^{(k)} = \sum_{i=1}^k 
       \frac{\alpha_i^{k+1} - \alpha_i^{k}}
            {2\alpha_i^k - (k+1)} \alpha_i^{n-1}
\end{equation}
It is surprising that even after calculating out the appropriate constants
in their equation (\ref{sjf}) for $2 \leq k \leq 10$, 
neither Spickerman nor Joyner noted that they could have
simply taken the first term in equation (\ref{sjf}) for all $n \geq 0$,
as Spickerman did in equation (\ref{sr}) for $k=3$.

The Spickerman-Joyner formula  (\ref{sjf})
was extended by Wolfram \cite{Wolfram} to the case with
arbitary starting conditions (rather than the initial
sequence $0, 0, \dots, 0, 1$). 
In the next section we will show that 
our formula (\ref{fT1}) in Theorem \ref{T1} is
equivalent to the Spickerman-Joyner formula given above (and thus is 
a special case of Wolfram's formula). 

Finally, we note that the polynomials 
 $x^k - x^{k-1} - \cdots - 1$ in Theorem \ref{T1}
 have been studied rather extensively.
They are irreducible polynomials with just 
%%%%%%%%%%%%%%%%%%%%%%%%%%%%%%%%%%%%%%%%%%%%%%%%%%%%%%%%%%%%%%%%%%
%
%   For irreducibility, see Solan's dissertation, page
%   20, which refers to Shinzel (Colloq. Math. 9, 1962, p. 291-
%   296) to quickly prove irreducibility. 
%
%%%%%%%%%%%%%%%%%%%%%%%%%%%%%%%%%%%%%%%%%%%%%%%%%%%%%%%%%%%%%%%%%%%
one zero outside the unit circle. That single zero is 
located between $2(1- 2^{-k})$ and $2$ (as seen in Wolfram's article
\cite{Wolfram}; Miles \cite{Miles} gave earlier and
less precise results). 
It is also known \cite[Lemma 3.11]{Wolfram} that the polynomials have
Galois group $S_k$ for $k \leq 11$; in particular, their zeros
can not be expressed in radicals for $5 \leq k \leq 11$. Wolfram
conjectured that the Galois group is always $S_k$. Cipu and Luca \cite{CipuLuca}
were able to show that the Galois group
is not contained in the alternating group $A_k$,
and for $k \geq 3$ 
it is not $2$-nilpotent. 
They point out that this means the zeros of the polynomials
 $x^k - x^{k-1} - \cdots - 1$ for $k \geq 3$ can not 
 be constructed by ruler and compass, but the question of whether they
 are expressible using radicals remains open.

\section{Preliminary Lemmas}

First, a few statements about the the number $\alpha$.

\begin{lemma}\label{La}
Let $\alpha>1$ be the 
real positive root of $x^k - x^{k-1} - \cdots - x - 1=0$. Then, 
\begin{equation}
2 - \frac{1}{k} <  \alpha < 2 
\end{equation}
In addition,
\begin{equation}
\ \ \ \ \ \ \ \ \ \ \ \ \   2 - \frac{1}{3k} <  \alpha < 2 \qquad \mbox{for } k \geq 4
\end{equation}
\end{lemma}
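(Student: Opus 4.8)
The plan is to work directly with the polynomial $p(x) = x^k - x^{k-1} - \cdots - x - 1$ and its behavior near $x = 2$. The key observation is the standard telescoping identity: multiplying $p(x)$ by $(x-1)$ gives $(x-1)p(x) = x^{k+1} - 2x^k + 1$. So if we set $q(x) = x^{k+1} - 2x^k + 1$, then for $x > 1$ the sign of $p(x)$ agrees with the sign of $q(x)$, and the positive root $\alpha > 1$ of $p$ is exactly the root of $q$ in $(1,2)$. Thus the whole problem reduces to locating the root of $q(x) = x^k(x-2) + 1$ in the interval $(1,2)$.

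First I would check the endpoints and establish that $\alpha < 2$: since $q(2) = 2^k(2-2) + 1 = 1 > 0$ and $q(1) = 1 - 2 + 1 = 0$ while $q'(1) < 0$ (so $q$ dips below zero just past $1$), the relevant root lies strictly between $1$ and $2$, giving $\alpha < 2$ immediately. For the lower bounds I would evaluate $q$ at the candidate points $2 - 1/k$ and $2 - 1/(3k)$ and show $q$ is \emph{negative} there; since $q$ is negative at these points and positive at $2$, and since $q$ is increasing on the relevant subinterval (one should check $q'(x) = x^{k-1}\big((k+1)x - 2k\big) > 0$ once $x > 2k/(k+1) = 2 - 2/(k+1)$, which comfortably contains $2 - 1/k$ for the first bound and $2-1/(3k)$ for the second), the root $\alpha$ must exceed that point. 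Concretely, $q(2 - 1/k) = (2 - 1/k)^k \cdot (-1/k) + 1$, so I need $(2-1/k)^k \cdot (1/k) > 1$, i.e. $(2 - 1/k)^k > k$; and for the sharper bound, $q(2 - 1/(3k)) = (2-1/(3k))^k\cdot(-1/(3k)) + 1$, so I need $(2 - 1/(3k))^k > 3k$.

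The main obstacle is therefore the two elementary-looking but slightly delicate inequalities $(2 - 1/k)^k > k$ (for all $k \geq 2$) and $(2 - 1/(3k))^k > 3k$ (for $k \geq 4$). For these I would write $(2 - 1/k)^k = 2^k(1 - 1/(2k))^k$ and use the bound $(1 - 1/(2k))^k \geq e^{-1/2} \cdot (\text{small correction})$, or more cleanly the elementary inequality $(1 - t)^k \geq 1 - kt$ is too weak, so instead I would use $\log(1 - 1/(2k)) \geq -1/(2k) - 1/(4k^2)\cdot c$ type estimates, or simply verify $2^k e^{-1/2}$-style lower bounds dominate $k$ and $3k$ respectively for all $k$ beyond a small threshold, handling $k = 2, 3$ (and $k=4,5$ for the second inequality) by direct numerical check. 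A clean alternative that avoids $e$ entirely: show $(2-1/k)^k/k$ is increasing in $k$ (ratio test on consecutive terms) and bottoms out at $k=2$ where $(3/2)^2/2 = 9/8 > 1$; similarly $(2 - 1/(3k))^k/(3k)$ is increasing and exceeds $1$ at $k = 4$. I expect the increasing-ratio argument to be the cleanest route, reducing everything to one base case each.
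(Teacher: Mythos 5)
Your proposal is correct and follows essentially the same route as the paper: both multiply by $(x-1)$ to get $q(x)=x^{k+1}-2x^k+1=x^k(x-2)+1$, note $q(2)=1>0$, and locate $\alpha$ by showing $q$ is negative at $2-\frac{1}{k}$ (equivalently $(2-\frac{1}{k})^k>k$) and at $2-\frac{1}{3k}$ (equivalently $(2-\frac{1}{3k})^k>3k$). The only difference is in discharging those elementary inequalities: the paper checks $k\le 5$ numerically and for $k\ge 6$ uses the crude bound $(2-\frac{1}{3k})^k>(\frac{5}{3})^k>3k$, whereas you propose a uniform increasing-ratio argument with base cases $k=2$ and $k=4$ --- which does go through (the ratio of consecutive terms of $(2-\frac{1}{k})^k/k$ is at least $\frac{2k-1}{k+1}\ge 1$), and your explicit monotonicity check of $q$ on the relevant interval makes the argument self-contained without invoking the uniqueness of the root outside the unit circle.
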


\begin{proof} 
We begin by computing the following chart for $k \leq 5$:
\begin{center}\begin{math}
\begin{array}{c|llc}
k & 2 - \frac{1}{k} & 2 - \frac{1}{3k} &  \alpha \\[1.2ex]
\hline
2 & 1.5   & 1.833\dots & 1.618\dots \\
3 & 1.666\dots \ \ \ & 1.889\dots \ \ \  & 1.839\dots \\
4 & 1.75  & 1.916\dots & 1.928\dots \\
5 & 1.8   & 1.933\dots & 1.966\dots 
\end{array}
\end{math}\end{center}

It's clear that $2 - \frac{1}{k} <  \alpha < 2$ for $2 \leq k \leq 5$ and that
$2 - \frac{1}{3k} <  \alpha < 2$ for $4 \leq k \leq 5$.
We now focus on $k \geq 6$. At this point, we 
could finish the proof by appealing to
$2(1 - 2^{-k}) < \alpha < 2$ as seen in the article \cite[Lemma 3.6]{Wolfram}, 
but we present here a simpler proof. 

Let $f(x) = (x-1)(x^k - x^{k-1} - \cdots - x - 1) = x^{k+1} -2x^k +1$. We know
from our earlier discussion that $f(x)$ has one real zero $\alpha > 1$. 
Writing $f(x)$ as $x^k(x-2) +1$, we have
\begin{equation}\label{23k}
f\left(2 - \frac{1}{3k}\right) = \left(2 - \frac{1}{3k}\right)^k 
         \left( \frac{-1}{3k}\right) + 1
\end{equation}
For $k\geq 6$, it's easy to show
\[
3k < \left(\frac{5}{3}\right)^k = \left(2 - \frac{1}{3}\right)^k < 
 \left(2 - \frac{1}{3k}\right)^k
\]
Substituting this inequality into the right-hand side of (\ref{23k}), we can
re-write 
 (\ref{23k}) as:
\[
f\left(2 - \frac{1}{3k}\right) < (3k)\cdot\left(\frac{-1}{3k}\right)  + 1 = 0.
\]
Finally, we note that
\[  f(2) = 2^{k+1} - 2\cdot 2^k + 1 = 1 >0,  \] 
so we can conclude that
our root $\alpha$ is within the desired bounds of $2 - 1/3k$ and $2$ for 
$k\geq 6$. 
\end{proof}

We now have a lemma about the coefficients of $\alpha^{n-1}$ in Theorems \ref{T1}
and \ref{T2}.

\begin{lemma}\label{Lm}
Let $k \geq 2$ be an integer, and let 
$m^{(k)}(x) = \D \frac{x-1}{2 + (k+1)(x-2)}$. Then, 
\begin{enumerate} 
\item  $m^{(k)}(2 - 1/k) = 1$. \label{Lmp1}
\item  $m^{(k)}(2) = \frac{1}{2}$.  \label{Lmp2}
\item  $m^{(k)}(x)$ is continuous and decreasing on the interval
$[2- 1/k, \infty)$. \label{Lmp3}
\item  $m^{(k)}(x) > \frac{1}{x}$ on the interval
$(2- 1/k, 2)$. \label{Lmp4}
\end{enumerate} 
\end{lemma}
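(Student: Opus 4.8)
The plan is to treat $m^{(k)}$ as the rational function $m^{(k)}(x) = \frac{x-1}{D(x)}$ with $D(x) := 2 + (k+1)(x-2) = (k+1)x - 2k$, and to begin by locating its pole. Since $D$ is linear with a single zero at $x = \frac{2k}{k+1} = 2 - \frac{2}{k+1}$, and since $\frac{2}{k+1} > \frac1k$ precisely when $k > 1$, that pole lies strictly to the left of $2 - \frac1k$. Evaluating at the left endpoint gives $D(2-\frac1k) = \frac{k-1}{k} > 0$, and $D$ is increasing, so $D(x) > 0$ on all of $[2 - \frac1k, \infty)$. I would record this first because it underlies everything else: it gives the continuity claimed in part~(\ref{Lmp3}) and it is what allows one to clear denominators in part~(\ref{Lmp4}).

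Parts~(\ref{Lmp1}) and~(\ref{Lmp2}) are then just substitution: at $x = 2 - \frac1k$ the numerator and denominator both equal $\frac{k-1}{k}$, so $m^{(k)}(2-\frac1k) = 1$; at $x = 2$ they are $1$ and $2$, so $m^{(k)}(2) = \frac12$.

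For part~(\ref{Lmp3}), continuity on $[2-\frac1k,\infty)$ is immediate from $D > 0$ there, and monotonicity follows by differentiating: the quotient rule gives $\frac{d}{dx}m^{(k)}(x) = \frac{D(x) - (x-1)D'(x)}{D(x)^2} = \frac{1-k}{D(x)^2}$, which is negative for $k \geq 2$. (Equivalently, the partial-fraction form $m^{(k)}(x) = \frac{1}{k+1} + \frac{k-1}{(k+1)D(x)}$ exhibits $m^{(k)}$ as a constant plus a positive decreasing function of $x$.)

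For part~(\ref{Lmp4}), on the interval $(2-\frac1k, 2)$ we have $x > 0$ and $D(x) > 0$, so the inequality $m^{(k)}(x) > \frac1x$ is equivalent, after multiplying through by $x\,D(x) > 0$, to $x(x-1) > (k+1)x - 2k$, i.e.\ to $x^2 - (k+2)x + 2k > 0$, i.e.\ to $(x-2)(x-k) > 0$; since $x < 2 \leq k$ on this interval, both factors are negative, so the product is positive and the claim follows. I do not anticipate a genuine obstacle anywhere — the lemma is a bundle of elementary checks — and the only step that truly needs attention is the very first one, confirming that the pole of $m^{(k)}$ sits strictly outside $[2-\frac1k,\infty)$, since without that the differentiation in part~(\ref{Lmp3}) and the cross-multiplication in part~(\ref{Lmp4}) would not be justified.
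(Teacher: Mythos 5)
Your proof is correct and follows essentially the same route as the paper's: both arguments rest on locating the pole of $m^{(k)}$ at $2 - \tfrac{2}{k+1}$ (equivalently, writing $m^{(k)}$ as a constant plus a multiple of $1/D(x)$) and on the fact that $m^{(k)}(x) - \tfrac{1}{x}$ has the sign of $(x-2)(x-k)$. The only cosmetic differences are that the paper establishes monotonicity by recognizing $m^{(k)}$ as a scaled translate of $y=1/x$ rather than by differentiating, and handles the last part by a continuity argument on the two intersection points $x=2$ and $x=k$ rather than by your direct factorization and sign check.
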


\begin{proof} 
Parts \ref{Lmp1} and \ref{Lmp2} are immediate. As for \ref{Lmp3}, note that 
we can rewrite $m^{(k)}(x)$ as:
\[
m^{(k)}(x) = \frac{1}{k+1} 
  \left[ 1 + \frac{1 - \frac{2}{k+1}}{x - (2 - \frac{2}{k+1})} \right]
\]
which is simply a scaled translation of the map $y = 1/x$. In particular, since
this $m^{(k)}(x)$ has a vertical asymptote at $x = 2 - \frac{2}{k+1}$, then
by parts \ref{Lmp1} and \ref{Lmp2} we can conclude that $m^{(k)}(x)$ is 
indeed continuous and decreasing on the desired interval. 

To show part \ref{Lmp4}, we first note that in solving
$\frac{1}{x} = m^{(k)}(x)$, we obtain a quadratic equation with the 
two intersection points $x=2$ and $x=k$. 
It's easy to show that 
$\frac{1}{x} < m^{(k)}(x)$  at 
$x = 2 - 1/k$, and since both functions $\frac{1}{x}$ and $m^{(k)}(x)$
are continuous on the inverval 
$[2- 1/k, \infty)$  and intersect only at $x=2$ and $x=k \geq 2$, we 
can conclude that $\frac{1}{x} < m^{(k)}(x)$  on the desired interval.
\end{proof}

\begin{lemma}\label{Le}
For a fixed value of $k \geq 2$ and for $n \geq 2-k$, 
define $E_n$ to be the error in our
Binet approximation of Theorem \ref{T2}, as follows:
\begin{eqnarray*}
E_n &=& F_n^{(k)} - \frac{\alpha - 1}{2 + (k+1)(\alpha - 2)} \cdot \alpha^{n-1}\\
    &=& F_n^{(k)} - m^{(k)}(\alpha) \cdot \alpha^{n-1},
\end{eqnarray*}
... for $\alpha$ the positive real root of 
$x^k - x^{k-1} - \cdots - x - 1 = 0$ and
$m^{(k)}$ as defined in Lemma \ref{Lm}. Then, $E_n$ satisfies the same recurrence
relation as $F_n^{(k)}$:
\[
E_n = E_{n-1} + E_{n-2} + \dots + E_{n-k} \qquad \qquad (\mbox{for } n \geq 2)
\]
\end{lemma}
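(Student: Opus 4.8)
The plan is to observe that $E_n$ is, by its very definition, the difference of two sequences each of which satisfies the homogeneous linear recurrence $x_n = x_{n-1} + x_{n-2} + \cdots + x_{n-k}$ for all $n \geq 2$; since the set of sequences obeying a fixed linear recurrence is closed under subtraction, $E_n$ must obey it as well.

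First I would note that $\bigl(F_n^{(k)}\bigr)$ satisfies this recurrence for every $n \geq 2$. This is immediate from the way the sequence is set up: the $k$ prescribed initial values $0, 0, \dots, 0, 1$ occupy the indices $2-k, 3-k, \dots, 0, 1$, and for $n \geq 2$ the value $F_n^{(k)}$ is by definition equal to $F_{n-1}^{(k)} + \cdots + F_{n-k}^{(k)}$. In particular, every index occurring in the recurrence at $n = 2$ --- namely $1, 0, \dots, 2-k$ --- is one on which the sequence has already been specified, so there is no boundary gap.

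Next I would check that the single geometric term $a_n := m^{(k)}(\alpha)\,\alpha^{n-1}$ satisfies the same recurrence, now for \emph{all} integers $n$. Since $\alpha$ is a root of $x^k - x^{k-1} - \cdots - x - 1 = 0$, we have $\alpha^k = \alpha^{k-1} + \alpha^{k-2} + \cdots + \alpha + 1$; multiplying this identity through by $m^{(k)}(\alpha)\,\alpha^{n-1-k}$ (a perfectly well-defined real number for every integer $n$, as $\alpha > 0$) yields precisely $a_n = a_{n-1} + a_{n-2} + \cdots + a_{n-k}$.

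Finally, because $E_n = F_n^{(k)} - a_n$ and both $\bigl(F_n^{(k)}\bigr)$ and $(a_n)$ satisfy $x_n = x_{n-1} + \cdots + x_{n-k}$ for $n \geq 2$, their difference satisfies it too, which is the claim. (One could instead invoke Theorem~\ref{T1}: ordering the roots so that $\alpha_1 = \alpha$ gives $E_n = \sum_{i=2}^{k} m^{(k)}(\alpha_i)\,\alpha_i^{n-1}$, a linear combination of geometric sequences whose ratios are roots of the characteristic polynomial, hence again a solution of the recurrence; but the argument above is self-contained and slightly more direct.) I do not expect any genuine obstacle here: the lemma is really just a repackaging of the linearity of the recurrence, and the only point that warrants a moment's care is verifying that the $F^{(k)}$-recurrence holds all the way down to $n = 2$ rather than merely for $n \geq k+1$ --- which is exactly what the stated initial conditions $0,\dots,0,1$ guarantee.
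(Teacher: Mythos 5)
Your proposal is correct and follows essentially the same route as the paper: the paper likewise observes that $F_n^{(k)}$ satisfies the recurrence by definition, that $\alpha^k = \alpha^{k-1} + \cdots + 1$ forces the geometric term $m^{(k)}(\alpha)\,\alpha^{n-1}$ to satisfy it as well, and then subtracts the two relations. Your extra care about the boundary index $n=2$ is a harmless refinement of the same argument.
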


\begin{proof}
By definition, we know that $F_n^{(k)}$ satisfies the recurrence relation:
\begin{equation} \label{Le1}
F_n^{(k)} = F_{n-1}^{(k)} + \dots + F_{n-k}^{(k)}
\end{equation}
As for the term $m^{(k)}(\alpha) \cdot \alpha^{n-1}$, note that $\alpha$ is a root
of $x^k - x^{k-1} - \cdots - 1 = 0$, which means that
$\alpha^k =  \alpha^{k-1} + \cdots + 1$, which implies
\begin{equation} \label{Le2}
m^{(k)}(\alpha) \cdot \alpha^{n-1} =  
m^{(k)}(\alpha)\alpha^{n-2} + \cdots + m^{(k)}(\alpha)\alpha^{n-(k+1)}
\end{equation}
We combine Equations (\ref{Le1}) and (\ref{Le2}) to obtain the desired result. 
\end{proof}

%%%%%%%%%%%%%%%%%%%%%%%%%%%%%%%%%%%%%%%%%%%%%%%%%%%%%%%%%%%%%%%%%%

\section{Proof of Theorem \ref{T1}}

As mentioned above, Spickerman and Joyner \cite{SJ} proved the following
formula for the $k$-generalized Fibonacci numbers:
\begin{equation}\label{sjf2}
F_n^{(k)} = \sum_{i=1}^k 
       \frac{\alpha_i^{k+1} - \alpha_i^{k}}
            {2\alpha_i^k - (k+1)} \alpha_i^{n-1}
\end{equation}
Recall that the set $\{\alpha_i\}$ is the set of roots of 
$x^k - x^{k-1} - \cdots - 1=0$. We now show that this formula is equivalent to 
our equation (\ref{fT1}) in Theorem \ref{T1}:
\begin{equation}\label{fT1again}
F_n^{(k)} = \sum_{i=1}^k \frac{\alpha_i - 1}{2 + (k+1)(\alpha_i - 2)} \alpha_i^{n-1}
\end{equation}
Since $\alpha_i^k - \alpha_i^{k-1} - \cdots - 1 = 0$, 
we can multiply by $\alpha_i -1$ to get
$\alpha_i^{k+1} - 2\alpha_i^{k} = -1$, which implies
$(\alpha_i-2) = -1\cdot\alpha_i^{-k}$.
We use this last equation to transform  (\ref{fT1again}) as follows:
\[
\frac{\alpha_i - 1}{2 + (k+1)(\alpha_i - 2)} =
\frac{\alpha_i - 1}{2 + (k+1)(-\alpha_i^{-k})} = 
    \frac{\alpha_i^{k+1} - \alpha_i^{k}}{2\alpha_i^k - (k+1)}
\]
This establishes the equivalence of the two formulas
(\ref{sjf2}) and (\ref{fT1again}), as desired. 
\hfill \ \ $\Box$

%%%%%%%%%%%%%%%%%%%%%%%%%%%%%%%%%%%%%%%%%%%%%%%%%%%%%%%%%%%%%%%%%%

\section{Proof of Theorem \ref{T2}}

Let $E_n$ be as defined in Lemma \ref{Le}. 
We wish to show that $|E_n| < \frac{1}{2}$ for all $n \geq 2-k$. 
We proceed by
first showing that $|E_n| < \frac{1}{2}$ for $n=0$, then for 
$n= -1, -2, -3,  \dots, 2-k$, then for $n=1$, and finally that 
this implies $|E_n| < \frac{1}{2}$ for all $n \geq 2-k$. 

To begin, we note that since our initial conditions give us that
$F_n^{(k)} = 0$ for 
$n = 0, -1, -2, \dots, 2-k$, then we need only show
$|m^{(k)}(\alpha) \cdot \alpha^{n-1}| < 1/2$ for 
those values of $n$. Starting with $n=0$,
it's easy to check by hand that 
$\D m^{(k)}(\alpha) \cdot \alpha^{-1} < 1/2$ for $k=2$ and $3$, and 
as for $k \geq 4$,
we have the following inequality from Lemma \ref{La}:
\[
2 - \frac{1}{3k} < \alpha,
\]
which implies
\[
{\alpha^{-1}} < \frac{3k}{6k-1}.
\]
Also, by Lemma \ref{Lm},
\[ 
m^{(k)}(\alpha) < m^{(k)}(2 - 1/{3k}) =  \frac{3k-1}{5k-1},
\] so thus:
\[
m^{(k)}(\alpha) \cdot \alpha^{-1}  < \frac{3k-1}{5k-1} \cdot  \frac{3k}{6k-1} 
 < \frac{(3k) \cdot 1}{(5k-1)\cdot 2} < \frac{1}{2},
\]
as desired. 
Thus, $0< |m^{(k)}(\alpha) \cdot \alpha^{-1}| < 1/2$ for all $k$, as desired.

Since $\alpha^{-1} < 1$, we can conclude that for $n= -1, -2, \dots, 2-k$, 
then
$|E_n| = m^{(k)}(\alpha) \cdot \alpha^{n-1} < 1/2$.

Turning our attention now to $E_1$, we note that $F_1^{(k)} = 1$
(again by definition of our initial conditions) and that
\[
 \frac{1}{2} = m(2) <  m(\alpha) <  m(2 - 1/k) = 1
\]
which immediately gives us $|E_1| < 1/2$. 

As for $E_n$ with $n \geq 2$, we know from Lemma \ref{Le} that
\[
E_n = E_{n-1} + E_{n-2} + \dots + E_{n-k} \qquad \qquad (\mbox{for } n \geq 2)
\]
Suppose for some $n \geq 2$ that $|E_n| \geq 1/2$. Let $n_0$ be the smallest
positive such $n$. Now, subtracting the following two equations:
\begin{eqnarray*}
E_{n_0+1} &=& E_{n_0} + E_{n_0-1} + \dots + E_{n_0-(k-1)} \\
E_{n_0} &=& E_{n_0-1} + E_{n_0-2} + \dots + E_{n_0-k}
\end{eqnarray*}
gives us:
\[
E_{n_0+1} = 2E_{n_0} - E_{n_0-k} 
\]
Since $|E_{n_0}| \geq |E_{n_0 -k}|$ 
(the first, by assumption, being larger than, and the second smaller than, $1/2$),
we can conclude that
$|E_{n_0+1}| > |E_{n_0}|$. 
In fact, we can apply this argument repeatedly to show that
$|E_{n_0+i}| > \cdots > |E_{n_0 + 1}| >  |E_{n_0}|$. 
However, this contradicts the observation from 
equation (\ref{obs}) that 
the error must eventually go to $0$. We conclude that 
$|E_n| < 1/2$ for all $n \geq 2$, and thus for all $n \geq 2-k$.
\hfill \ \ $\Box$

\section{Acknowledgement} 
The author would like to thank 
J.~Siehler for inspiring this paper with his work on Tribonacci numbers.

%%%%%%%%%%%%%%%%%%%%%%%%%%%%%%%%%%%%%%%%%%%%%%%%%%%%%%%%%%%%%%%%%%
 
% 
% This bibliography is modified from the .bbl file.
%
%\bibliography{BinetSubmittedJIS}
%\bibliographystyle{plain}      % (used to create the .bbl file)

%%%%%%%%%%%%%%%%%%%%%%%%%%%%%%%%%%%%%%%%%%%%%%%%%%%%%%%%%%%%%%%%%%

\bigskip
\hrule
\bigskip

\noindent 2000 {\it Mathematics Subject Classification}:
Primary 11B39, Secondary 11C08, 33F05, 65D20.

\noindent \emph{Keywords: } $k$-generalized Fibonacci numbers, 
Binet, Tribonacci, Tetranacci, Pentanacci.

\bigskip
\hrule
\bigskip

\noindent (Concerned with sequences
\seqnum{A000073}, 
\seqnum{A000078}, 
and \seqnum{A001591}.)

\bigskip
\hrule
\bigskip

\vspace*{+.1in}
\noindent
Received October XX, 2008.

\bigskip
\hrule
\bigskip

\noindent
Return to
\htmladdnormallink{Journal of Integer Sequences home page}{http://www.cs.uwaterloo.ca/journals/JIS/}.
\vskip .1in

\end{document}